\documentclass{amsart}
\usepackage{mathabx,mathtools,amsrefs,mathrsfs,math,tikz}
\usetikzlibrary{arrows}
\newcommand\K{{\mathbb K}}
%\makeatletter\def\im{\qopname@{im}}\makeatother
\renewcommand\im{{\operatorname{im}}}
\numberwithin{equation}{section}

\begin{document}
\title{Cellular automata, duality and sofic groups}
\author{Laurent Bartholdi}
\date{June 25, 2017}
\address{D\'epartement de Math\'ematiques et Applications, \'Ecole Normale Sup\'erieure, Paris \textit{and} Mathematisches Institut, Georg-August Universit\"at zu G\"ottingen}
\email{laurent.bartholdi@gmail.com}
%\dedicatory{To Tullio G.\ Ceccherini-Silberstein, on the occasion of his fiftieth anniversary}

\thanks{This work is supported by the ``@raction'' grant ANR-14-ACHN-0018-01}
\begin{abstract}
  We produce for arbitrary non-amenable group $G$ and field $\K$ a
  non-pre-injective, surjective linear cellular automaton. This
  answers positively Open Problem (OP-14) in Ceccherini-Silberstein
  and Coornaert's monograph ``Cellular Automata and Groups''.

  We also reprove in a direct manner, for linear cellular automata,
  the result by Capobianco, Kari and Taati that cellular automata over
  sofic groups are injective if and only if they are post-surjective.

  These results come from considerations related to matrices over
  group rings: we prove that a matrix's kernel and the image of its
  adjoint are mutual orthogonals of each other. This gives rise to a
  notion of ``dual cellular automaton'', which is pre-injective if and
  only if the original cellular automaton is surjective, and is
  injective if and only if the original cellular automaton is
  post-surjective.
\end{abstract}
\maketitle

\section{Introduction}
\subsection{Cellular automata}
Let $G$ be a group and let $\K$ be a field. A \emph{linear cellular
  automaton} on $G$ is --- no more, no less --- a square matrix with
entries in the group ring $\K G$.

The interpretation of a linear cellular automaton
$\Theta\in M_n(\K G)$ is as follows. Let $S$ be a finite subset of $G$
such that all entries of $\Theta$ are in the $\K$-span of
$S$. Construct the graph $\mathscr G$ with vertex set $G$, and with an
edge from $g$ to $g s$ for all $g\in G,s\in S$. Put a copy of the
vector space $V\coloneqq\K^n$ at each vertex of $\mathscr G$.
Elements of the vector space $V^G=\{c\colon G\to V\}$ are called
\emph{configurations}. Then $\Theta$, defines a one-step evolution
rule still written $\Theta$ on the space of configurations, in which
each vertex of $\mathscr G$ inherits a value in $V$ depending on the
values at its neighbours: one may write
$\Theta=\sum_{s\in S}\Theta_s s$ for $\K$-matrices $\Theta_s$, and
then every configuration $c\in V^G$ evolves under $\Theta$ to the
configuration taking at every $g\in G$ the value
$\sum_{s\in S}\Theta_s(c(s^{-1}g))$. More concisely, $c$ evolves to
$\Theta\cdot c$. For more information on linear cellular automata, we
defer to~\cite{ceccherini-coornaert:cag}*{Chapter~8}.

Linear cellular automata are natural linear analogues of classical
cellular automata, in which each vertex of $\mathscr G$ takes a value
in a finite set $A$, which evolves according to the values at its
neighbours. The cellular automaton is thus a locally-defined evolution
rule on the compact space $A^G$. In particular, if $\K$ is a finite
field, then every linear cellular automaton is also a classical
cellular automaton.

The converse, however, is far from true: linear cellular automata are
extremely restricted computational models, and there is no clear way
of converting a classical cellular automaton into a linear one. Every
self-map of a finite set $A$ induces a self-map of the
finite-dimensional vector space $V\coloneqq\K A$, so cellular automata
acting on $A^G$ induce linear self-maps on $\K(A^G)$, but this space
is much larger than $V^G$: the former is a completion of the tensor
power $\bigotimes_G V$ (the ``measuring coalgebra''
$\K G\rightharpoonup V$), while the latter is a completion of the
direct sum $\bigoplus_G V$.

\subsection{Sofic groups and surjunctivity}
How are algebraic properties of the group $G$ reflected in the
cellular automata carried by $\mathscr G$?  We single out some
properties of cellular automata which have received particular
attention: let us write $x\sim y$ for $x,y\in A^G$ when
$\{g\in G\mid x(g)\neq y(g)\}$ is finite. A cellular automaton
$\Theta\colon A^G\righttoleftarrow$ is
\begin{description}
\item[injective] if $\Theta(x)=\Theta(y)$ implies $x=y$;
\item[pre-injective] if $\Theta(x)=\Theta(y)\wedge x\sim y$ implies
  $x=y$; otherwise one calls such $x,y$ \emph{Mutually Eraseable
    Patterns};
\item[surjective] if $\Theta(x)=A^G$; one then says that $\Theta$ has
  no \emph{Garden of Eden};
\item[post-surjective] if $y\sim\Theta(x)$ implies
  $\exists z\sim x: \Theta(z)=y$.
\end{description}

Moore and Myhill's celebrated ``Garden of Eden'' theorem asserts that,
if $G=\Z^d$, then cellular automata are pre-injective if and only if
they are surjective~\cites{moore:ca,myhill:ca}. This has been extended
to \emph{amenable} groups $G$ by Ceccherini-Silberstein, Mach\`\i\ and
Scarabotti~\cite{ceccherini-m-s:ca}, and I proved
in~\cites{bartholdi:moore,bartholdi:myhill} that both results may fail
as soon as $G$ is not amenable. We shall not need the precise
definition of amenable groups; suffice it to say that one of the
equivalent definitions states that $G$ contains finite subsets that
are arbitrarily close to invariant under translation, in the sense
that for every finite $S\subseteq G$ and every $\epsilon>0$ there
exists a finite subset $F\subseteq G$ with
$\#(F S\setminus F)<\epsilon\#F$. We recall:
\begin{thm}[\cite{bartholdi:myhill}]\label{thm:myhill}
  For a group $G$, the following are equivalent:
  \begin{enumerate}
  \item $G$ is non-amenable;
  \item for some integer $n$ and every (or equivalently some) field
    $\K$, there is an injective $G$-module map
    $(\K G)^n\to(\K G)^{n-1}$.
  \end{enumerate}
\end{thm}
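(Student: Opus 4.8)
The plan is to prove the two implications separately; they are of quite different character, one being a short counting argument and the other the substantial content of~\cite{bartholdi:myhill}.

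For the direction ``the injective map exists $\implies$ $G$ is non-amenable'' I would argue by contraposition. Suppose $G$ is amenable and that $\phi\colon(\K G)^n\to(\K G)^{n-1}$ is a $G$-module map; it is given by a matrix over $\K G$ all of whose entries are supported in some finite set $S\ni1$, so $\phi$ carries the space of tuples supported in a finite set $F$ into the space of tuples supported in $FS$. These spaces have $\K$-dimensions $n\#F$ and $(n-1)\#(FS)$. If $\phi$ were injective, its restriction to each such finite-dimensional space would be injective too, forcing $n\#F\le(n-1)\#(FS)$, hence $\#(FS)\ge\tfrac{n}{n-1}\#F$, for \emph{every} finite $F$. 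But the Følner condition applied to $S$ with $\epsilon=\tfrac1{2(n-1)}$ produces a finite $F$ with $\#(FS\setminus F)<\epsilon\#F$, so, as $F\subseteq FS$, we get $\#(FS)<(1+\epsilon)\#F<\tfrac{n}{n-1}\#F$, a contradiction. This half uses nothing about the field, so it simultaneously rules out such a map over every $\K$.

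For the converse I would first reduce to $G$ finitely generated: a non-amenable group contains a finitely generated non-amenable subgroup $H$, and since $\K G$ is free, hence flat, as a one-sided $\K H$-module, the functor $\K G\otimes_{\K H}-$ turns an injective $(\K H)^n\to(\K H)^{n-1}$ into an injective $(\K G)^n\to(\K G)^{n-1}$. With $G=\langle S\rangle$ finitely generated, the model case to keep in mind is $G$ free of rank~$2$, where the augmentation ideal is a \emph{free} module on $\{a-1,b-1\}$ and so $(x,y)\mapsto x(a-1)+y(b-1)$ already gives an injective $(\K G)^2\to\K G$. In general I would extract from the failure of the Følner condition, by applying Hall's matching theorem to the bipartite graph on $G\times\{1,\dots,n\}$ and $G\times\{1,\dots,n-1\}$ with $(g,i)$ adjacent to every $(h,j)$ having $h\in gS$ — legitimate once $S$ is enlarged so that $\#(FS)\ge\tfrac{n}{n-1}\#F$ for all finite $F$, which non-amenability permits by taking a large enough power of a set witnessing the failure of Følner's condition — an $S$-bounded injection $\iota\colon G\times\{1,\dots,n\}\hookrightarrow G\times\{1,\dots,n-1\}$. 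The set $\iota$ then prescribes the support pattern of a candidate matrix over $\K G$, and the task becomes choosing the $\K$-coefficients so that the resulting $\K G$-linear map has trivial kernel.

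That last step is where I expect the real difficulty to lie, and it is the heart of~\cite{bartholdi:myhill}. The combinatorial injection $\iota$ is not $G$-equivariant, so it does not itself define a $\K G$-linear map; one must actually pick the coefficients — generically when $\K$ is infinite, with a supplementary argument (for instance reduction modulo a prime, or a direct combinatorial selection) when $\K$ is finite — and then verify injectivity, which unwinds to the absence of nontrivial solutions of countably many finite linear systems whose combinatorial shape is controlled by $\iota$. Granting this, one obtains the injective map over every field, and combining with the first half yields simultaneously the stated equivalence and the equivalence of the ``some $\K$'' and ``every $\K$'' formulations.
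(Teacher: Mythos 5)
The paper gives no proof of this statement: it is imported wholesale from \cite{bartholdi:myhill}, so there is nothing internal to compare your argument against, and the question is whether your blind attempt actually establishes it. Your proof of (2)$\Rightarrow$(1) does: a left-module map $(\K G)^n\to(\K G)^{n-1}$ is right multiplication by an $n\times(n-1)$ matrix with entries supported in a finite $S\ni1$, it carries tuples supported in $F$ into tuples supported in $FS$, and the resulting dimension count $n\#F\le(n-1)\#(FS)$ collides with a F\o lner set for $(S,\tfrac1{2(n-1)})$. This is correct, complete, and visibly independent of the field, which is indeed what makes the ``some $\K$'' and ``every $\K$'' formulations interchangeable.

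The direction (1)$\Rightarrow$(2), however, is not proved; it is outlined, and the outline stops exactly at the step that constitutes the theorem. The preliminary reductions are sound: a non-amenable group has a finitely generated non-amenable subgroup $H$, and induction along $\K H\subseteq\K G$ (with $\K G$ free, hence flat, as a right $\K H$-module) preserves injectivity; taking a power of a set witnessing the failure of F\o lner's condition gives $S$ with $\#(FS)\ge\frac{n}{n-1}\#F$ for all finite $F$; and the locally finite version of Hall's theorem then yields a bounded injection $\iota\colon G\times\{1,\dots,n\}\hookrightarrow G\times\{1,\dots,n-1\}$. But the passage from $\iota$ to an injective \emph{$\K G$-linear} map is where all the content lies, and ``choose the coefficients generically, with a supplementary argument for finite fields'' is a statement of the problem rather than a solution: injectivity of the module map is the simultaneous non-vanishing of infinitely many determinants indexed by the finite subsets of $G$, and one must show that none of them vanishes identically, which is precisely where the combinatorial structure of the matching has to be exploited and is the heart of \cite{bartholdi:myhill}; the finite-field case is a genuine additional difficulty there, not a footnote. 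As written, your argument for this implication defers to the cited paper at its decisive step, i.e.\ it reduces the theorem to itself.
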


We shall also not need the precise definition of \emph{sofic groups},
a common generalization of amenable and residually finite groups; we
refer to the original article~\cite{weiss:sofic}. Suffice it to say
that it is at present unknown whether non-sofic groups exist, and that
if $G$ is sofic then it satisfies Gottschalk's ``Surjunctivity
Conjecture'' from~\cite{gottschalk:general}, namely every injective
cellular automaton is surjective~\cite{weiss:sofic}*{\S3}.
Capobianco, Kaari and Taati show
in~\cite{capobianco-kari-taati:postsurjectivity} that, when $G$ is
sofic, every post-surjective cellular automaton is pre-injective. Thus
\[\begin{tikzpicture}[¥/.style={double equal sign distance,double,-implies}]
    \node (PS) at (0,0) {post-surjective};
    \node (I) at (0,2) {injective};
    \node (S) at (8,0) {surjective};
    \node (PI) at (8,2) {pre-injective};
    \draw[¥] (I) -- (PI);
    \draw[¥] (PS) -- (S);
    \node[shape=rectangle,draw] (A) at (8,1) {iff $G$ amenable};
    \draw[¥] (A) -- (PI);
    \draw[¥] (A) -- (S);
    \node[shape=rectangle,draw] (X) at (4,1) {if $G$ sofic};
    \draw[double equal sign distance,double] (X) -- (PS);
    \draw[double equal sign distance,double] (X) -- (I);
    \draw[¥] (X) -- (S);
    \draw[¥] (X) -- (PI);
  \end{tikzpicture}
\]

We remark that if a cellular automaton is injective and surjective,
then its inverse is also a cellular automaton. Similarly, if a
cellular automaton is pre-injective and post-surjective, then it is
bijective and its inverse is also a cellular automaton.

The notions of (pre-)injectivity and (post-)surjectivity become
substantially simpler in the context of linear cellular automata, and
exhibit more clearly the duality:
\begin{lem}
  A linear cellular automaton $\Theta\colon V^G\righttoleftarrow$ is
  pre-injective, respectively post-surjective if and only if its
  restriction to $\bigoplus_G V$ is injective, respectively
  surjective.\qed
\end{lem}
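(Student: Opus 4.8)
The plan is to unwind both equivalences by exploiting the $\K G$-linearity of $\Theta$ and the translation-equivariance it encodes. Recall that $\Theta$ acts on $V^G$ by a finitely-supported convolution, so it restricts to an endomorphism of the submodule $\bigoplus_G V$ of finitely-supported configurations, and the relation $x\sim y$ is exactly $x-y\in\bigoplus_G V$. The idea is that every displayed condition in the definitions (pre-injectivity, post-surjectivity) is a statement about the difference, respectively the solvability of a difference equation, so after subtracting we are comparing the behaviour of $\Theta$ on all of $V^G$ versus on $\bigoplus_G V$.

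First I would treat pre-injectivity. Suppose $\Theta$ is pre-injective. If $c\in\bigoplus_G V$ satisfies $\Theta(c)=0$, then taking $x=c$ and $y=0$ we have $x\sim y$ and $\Theta(x)=\Theta(y)$, hence $x=y$, i.e.\ $c=0$; so $\Theta|_{\bigoplus_G V}$ is injective. Conversely, if $\Theta(x)=\Theta(y)$ and $x\sim y$, put $c=x-y\in\bigoplus_G V$; then $\Theta(c)=0$ by linearity, so $c=0$ and $x=y$. This direction is immediate once one notes $\Theta(0)=0$ and that $x-y$ lies in the finitely-supported part precisely when $x\sim y$.

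Next I would treat post-surjectivity, which is slightly less symmetric because the conclusion $\exists z\sim x:\Theta(z)=y$ quantifies over configurations near a \emph{given} $x$ rather than near $0$. Suppose $\Theta$ is post-surjective. Given $d\in\bigoplus_G V$, apply post-surjectivity with $x=0$ and $y=d$: since $d\sim 0=\Theta(0)$ there is $z\sim 0$, i.e.\ $z\in\bigoplus_G V$, with $\Theta(z)=d$; so $\Theta|_{\bigoplus_G V}$ is surjective. Conversely, assume $\Theta|_{\bigoplus_G V}$ is onto. Let $y\sim\Theta(x)$, so $d\coloneqq y-\Theta(x)\in\bigoplus_G V$; pick $w\in\bigoplus_G V$ with $\Theta(w)=d$ and set $z=x+w$. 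Then $z\sim x$ and $\Theta(z)=\Theta(x)+d=y$. Again linearity does all the work, translating the ``near $x$'' condition to a ``near $0$'' condition by an affine shift.

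I do not expect a genuine obstacle here; the only points requiring a word of care are (i) checking that $\Theta$ really does map $\bigoplus_G V$ into itself, which follows because $\Theta=\sum_{s\in S}\Theta_s s$ with $S$ finite, so the support of $\Theta\cdot c$ is contained in $(\mathrm{supp}\,c)\cdot S^{-1}$ and hence finite when $\mathrm{supp}\,c$ is; and (ii) the consistent use of the identity $\Theta(x)-\Theta(y)=\Theta(x-y)$ together with the characterization $x\sim y\iff x-y\in\bigoplus_G V$. With these in hand all four implications are one-line affine manipulations, which is exactly why the statement is marked \qed in the text.
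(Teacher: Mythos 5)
Your argument is correct and is exactly the routine verification the paper omits (the lemma is stated with a \qed and no proof): linearity turns both conditions into statements about $\Theta(x-y)$, and $x\sim y$ is precisely $x-y\in\bigoplus_G V$, so all four implications follow by the affine shifts you describe. The only blemish is the support estimate in point (i): with the paper's convention $(\Theta\cdot c)(g)=\sum_{s\in S}\Theta_s(c(s^{-1}g))$ one gets $\operatorname{supp}(\Theta\cdot c)\subseteq S\cdot\operatorname{supp}(c)$ rather than $(\operatorname{supp} c)\cdot S^{-1}$, but either set is finite, so the conclusion that $\Theta$ preserves $\bigoplus_G V$ is unaffected.
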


Note that non-surjective linear cellular automata
$\Theta\colon V^G\righttoleftarrow$ avoid a non-empty open subset of
$V^G$, namely there exists a finite subset $F\subseteq G$ and
$x\in V^F$ such that $\Theta(y)$ never restricts to $x$ on $F$, see
Proposition~\ref{prop:closed}.

\subsection{A problem by Ceccherini-Silberstein and Coornaert}
Ceccherini-Silberstein and Coornaert prove in~\cite{ceccherini-c:lca}
that if $G$ is an amenable group then a linear cellular automaton on
$G$ is pre-injective if and only if it is surjective, and ask if this
is also a characterization of amenability in the restricted context of
linear cellular automata.

The construction I gave in~\cite{bartholdi:myhill} actually produces,
for every non-amenable group $G$, a pre-injective, non-surjective
linear cellular automaton. Ceccherini-Silberstein and Coornaert ask
in~\cite{ceccherini-coornaert:cag}*{Open Problem 14}:
\begin{problem}\label{problem:main}
  Let $G$ be a non-amenable group and let $\K$ be a field. Does
  there exist a finite-dimensional $\K$-vector space $V$ and a
  linear cellular automaton $\Theta\colon V^G\righttoleftarrow$ which
  is surjective but not pre-injective?
\end{problem}

The group ring $\K G$ admits an anti-involution $*$, defined on basis
elements $g\in G$ by $g^*\coloneqq g^{-1}$ and extended by
linearity. It induces an anti-involution on $M_n(\K G)$ as follows:
for $\Theta\in M_n(\K G)$, set $(\Theta^*)_{i j}=\Theta_{j i}^*$ for
all $i,j\in\{1,\dots,n\}$; namely, $\Theta^*$ is computed from
$\Theta$ by transposing the matrix and applying $*$ to all its
entries. Clearly $\Theta^{**}=\Theta$. There is a natural bilinear
pairing $(\K G)^n\times(\K^n)^G\to\K$, given by
\[\langle v|\xi\rangle\coloneqq\sum_{g\in G}\sum_v(g)\cdot\xi(g).\]
In this article, I shall prove:
\begin{thm}\label{thm:main}
  Let $G$ be a group, let $\K$ be a field, and let
  $\Theta\in M_n(\K G)$ be a linear cellular automaton. Then
  \begin{align}
    \ker(\Theta|\K^n G)^\perp &=\im(\Theta^*|(\K^n)^G),\label{eq:main:1}\\
    \ker(\Theta|(\K^n)^G)^\perp &=\im(\Theta^*|\K^n G),\label{eq:main:2}\\
    \im(\Theta|\K^n G)^\perp &= \ker(\Theta^*|(\K^n)^G),\label{eq:main:3}\\
    \im(\Theta|(\K^n)^G)^\perp &= \ker(\Theta^*|\K^n G).\label{eq:main:4}
  \end{align}
  In particular, $\Theta$ is pre-injective if and only if $\Theta^*$
  is surjective, and $\Theta$ is injective if and only if $\Theta^*$
  is post-surjective.
\end{thm}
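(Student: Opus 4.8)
The strategy has two ingredients: an \emph{adjointness identity} for $*$ with respect to the pairing, and the elementary fact that $(\K^n)^G=\prod_G\K^n$ is the full linear dual of $\K^n G=\bigoplus_G\K^n$.

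First I would record, as a lemma, that for all $v\in\K^n G$ and $\xi\in(\K^n)^G$,
\[\langle\Theta v\mid\xi\rangle=\langle v\mid\Theta^*\xi\rangle,\quad\text{and hence, applying this to }\Theta^*\text{ and using }\Theta^{**}=\Theta,\quad\langle\Theta^*v\mid\xi\rangle=\langle v\mid\Theta\xi\rangle.\]
This is a direct computation: writing $\Theta=\sum_{s\in S}\Theta_s s$ with $\K$-matrices $\Theta_s$, one unfolds $(\Theta v)(g)=\sum_s\Theta_s v(s^{-1}g)$, performs the substitution $h=s^{-1}g$ in the \emph{finite} sum $\sum_g(\Theta v)(g)\cdot\xi(g)$, and recognises the resulting $\sum_s\Theta_s^{\mathsf T}\xi(sh)$ (here $\Theta_s^{\mathsf T}$ is the transpose of the matrix $\Theta_s$) as $(\Theta^*\xi)(h)$, since $\Theta^*=\sum_s\Theta_s^{\mathsf T}s^{-1}$. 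I expect the only genuinely delicate bookkeeping in the whole argument to be located here: getting the index substitution and the matrix transpose mutually consistent, and then, in every subsequent line, keeping straight in which of the two spaces each orthogonal complement is taken.

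Next I set up the duality: the pairing identifies $(\K^n)^G$ with the full algebraic dual of $\K^n G$, it is non-degenerate on both sides, and consequently $U^{\perp\perp}=U$ for \emph{every} subspace $U\subseteq\K^n G$ (a functional vanishing on $U$ but not at a prescribed $v\notin U$ is built from a basis, and it automatically lies in $(\K^n)^G$, that being the whole dual). With this in hand, \eqref{eq:main:1} and \eqref{eq:main:3} become the elementary identities $\im(T^*)=(\ker T)^\perp$ and $\ker(T^*)=(\im T)^\perp$ for the endomorphism $T=\Theta$ of $\K^n G$, whose dual is $T^*=\Theta^*|(\K^n)^G$ precisely by the adjointness identity. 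For \eqref{eq:main:4} one computes directly that $v\in\ker(\Theta^*|\K^n G)$ iff $\langle\Theta^*v\mid\xi\rangle=0$ for all $\xi$ (non-degeneracy) iff $\langle v\mid\Theta\xi\rangle=0$ for all $\xi$ (adjointness) iff $v\in\im(\Theta|(\K^n)^G)^\perp$. Finally \eqref{eq:main:2} follows by running the same computation in the other order to get $\im(\Theta^*|\K^n G)^\perp=\ker(\Theta|(\K^n)^G)$, and then applying $U^{\perp\perp}=U$ once.

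The two consequences then reduce to the Lemma of the introduction. Pre-injectivity of $\Theta$ means injectivity of $\Theta|\K^n G$, i.e.\ $\ker(\Theta|\K^n G)=0$; by \eqref{eq:main:1} this is equivalent to $\im(\Theta^*|(\K^n)^G)=\{0\}^\perp=(\K^n)^G$, i.e.\ to surjectivity of $\Theta^*$ (the reverse implication uses $\ker(\Theta|\K^n G)^{\perp\perp}=\ker(\Theta|\K^n G)$ together with $((\K^n)^G)^\perp=0$). Likewise, post-surjectivity of $\Theta^*$ means surjectivity of $\Theta^*|\K^n G$, i.e.\ $\im(\Theta^*|\K^n G)=\K^n G$; by \eqref{eq:main:2} this reads $\ker(\Theta|(\K^n)^G)^\perp=\K^n G$, and taking orthogonals and invoking non-degeneracy gives $\ker(\Theta|(\K^n)^G)=0$, i.e.\ injectivity of $\Theta$. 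Beyond the adjointness identity, no step is more than formal manipulation of orthogonal complements.
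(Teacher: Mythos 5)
Your proof is correct, and most of it coincides with the paper's: the adjointness identity $\langle\Theta^*\omega|c\rangle=\langle\omega|\Theta c\rangle$ is exactly the paper's key lemma, the inclusions \eqref{eq:main:3} and \eqref{eq:main:4} are the same direct computation from non-degeneracy, and your derivation of \eqref{eq:main:2} via $U^{\perp\perp}=U$ is the same mechanism as the paper's choice of a separating functional in $(\K^n G)^*\cong(\K^n)^G$. The one genuinely different step is the hard inclusion $\ker(\Theta|\K^n G)^\perp\subseteq\im(\Theta^*|(\K^n)^G)$ of \eqref{eq:main:1}. The paper proves it topologically: it invokes Proposition~\ref{prop:closed} (closedness of the image of a linear cellular automaton in $(\K^n)^G$, imported from Ceccherini-Silberstein and Coornaert's Theorem~8.8.1) to separate a configuration outside $\im(\Theta^*|(\K^n)^G)$ from that image by a finitely supported linear form. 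You instead use the purely algebraic fact that for the full dual $T'$ of an endomorphism $T$ of $W=\K^n G$ one has $\im T'=(\ker T)^\perp$: a functional killing $\ker T$ factors through $\im T$ and extends to all of $W$ by choosing a complement, and because $(\K^n)^G$ is the entire algebraic dual of $\bigoplus_G\K^n$ the extending functional is automatically represented by an element of $(\K^n)^G$. This is sound and more self-contained --- no appeal to Proposition~\ref{prop:closed} or to any topology on $(\K^n)^G$ --- and it even recovers that proposition as a by-product: the right-hand side $\ker(\Theta|\K^n G)^\perp$ is an intersection of sets each cut out by a condition on a finite window, hence closed, and every linear cellular automaton is of the form $\Theta^*$. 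What the paper's route makes visible, and yours does not, is precisely this topological content, which the author wants on record independently for the remark that non-surjective linear cellular automata avoid a nonempty open set. Your treatment of the two ``in particular'' consequences matches the paper's.
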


\noindent This answers positively Problem~\ref{problem:main}:
\begin{cor}\label{cor:main}
  Let $G$ be a non-amenable group and let $\K$ be an arbitrary
  (possibly finite) field. Then there exist surjective,
  non-pre-injective linear cellular automata on $G$.
\end{cor}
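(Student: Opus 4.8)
The plan is to use the duality of Theorem~\ref{thm:main} to convert a pre-injective, non-surjective linear cellular automaton into a surjective, non-pre-injective one: I will build from Theorem~\ref{thm:myhill} a square matrix $\Theta\in M_n(\K G)$ that is pre-injective but not surjective, and then check that $\Theta^*$ has the two properties demanded by Problem~\ref{problem:main}.

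Since $G$ is non-amenable, Theorem~\ref{thm:myhill} provides an integer $n$ (necessarily $n\geq 2$) and an injective $G$-module map $\phi\colon(\K G)^n\to(\K G)^{n-1}$, which I view as an $(n-1)\times n$ matrix over $\K G$. Let $\Theta\in M_n(\K G)$ be the square matrix whose first $n-1$ rows are the rows of $\phi$ and whose last row is zero, so that on $\K^n G=(\K G)^n$ the map $\Theta$ is $\phi$ followed by the inclusion of $(\K G)^{n-1}$ onto the first $n-1$ coordinates of $(\K G)^n$. As a composite of injective maps, $\Theta|\K^n G$ is injective, so $\Theta$ is pre-injective by the Lemma identifying pre-injectivity with injectivity on $\bigoplus_G V=\K^n G$. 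On the other hand, the $n$-th coordinate of $\Theta\cdot c$ vanishes identically for every $c\in(\K^n)^G$, so $\im(\Theta|(\K^n)^G)$ lies in a proper subspace of $(\K^n)^G$; hence $\Theta$ is not surjective.

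Finally I invoke Theorem~\ref{thm:main}. Its last sentence, applied to $\Theta$, gives that $\Theta^*$ is surjective because $\Theta$ is pre-injective; applied to $\Theta^*$ and using $\Theta^{**}=\Theta$, it gives that $\Theta^*$ is not pre-injective because $\Theta$ is not surjective. Thus $\Theta^*\in M_n(\K G)$ is a surjective, non-pre-injective linear cellular automaton on $G$, which is exactly what Problem~\ref{problem:main} asks for. All the real content is already contained in Theorems~\ref{thm:myhill} and~\ref{thm:main}; the only point that needs a little care is the padding step — arranging the rectangular map of Theorem~\ref{thm:myhill} into a square matrix so as to keep injectivity on the finitely supported configurations $\K^n G$ while destroying surjectivity on all of $(\K^n)^G$ — and appending a zero row does precisely this.
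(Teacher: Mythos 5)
Your proof is correct and follows exactly the paper's route: the paper likewise takes the injective map from Theorem~\ref{thm:myhill}, pads it with a row of zeros to obtain a pre-injective, non-surjective $\Theta\in M_n(\K G)$, and then takes $\Theta^*$ via the duality of Theorem~\ref{thm:main}. You have merely spelled out the details (injectivity on $\K^n G$ giving pre-injectivity via the Lemma, the vanishing last coordinate giving non-surjectivity, and the application of Theorem~\ref{thm:main} to both $\Theta$ and $\Theta^*$) that the paper leaves implicit.
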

\begin{proof}
  Let $\Theta\in M_n(\K G)$ be a pre-injective, non-surjective linear
  cellular automaton, obtained e.g.\ by adding a full row of $0$'s to
  the matrix given by Theorem~\ref{thm:myhill}. Then $\Theta^*$ is the
  required example.
\end{proof}

\subsection{Capobianco, Kari and Taati's result}
From this duality of linear cellular automata, one also deduces an
immediate proof of Capobianco, Kari and Taati's main result, when
restricted to linear cellular automata:
\begin{thm}[see~\cite{capobianco-kari-taati:postsurjectivity}*{Theorem~2}]
  Let $G$ be a sofic group. Then every post-surjective linear cellular
  automaton is pre-injective.
\end{thm}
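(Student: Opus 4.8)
The plan is to read this statement off Theorem~\ref{thm:main}, using as the sole external input the \emph{linear surjunctivity} of sofic groups: if $G$ is sofic, then every injective linear cellular automaton $\Theta\colon V^G\righttoleftarrow$ is surjective. For a finite field $\K$ this is already contained in Weiss's classical surjunctivity of sofic groups, since linear cellular automata over a finite field are in particular classical cellular automata; for general $\K$ it is the theorem of Ceccherini-Silberstein and Coornaert on injective linear cellular automata over sofic groups, whose proof runs through a counting argument over a sofic approximation of $G$.

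Granting this, the argument is a two-step dualization. Let $\Theta\in M_n(\K G)$ be a post-surjective linear cellular automaton. Applying the last sentence of Theorem~\ref{thm:main} to $\Theta^*$ in place of $\Theta$, and using $\Theta^{**}=\Theta$, the post-surjectivity of $\Theta$ is \emph{equivalent} to the injectivity of $\Theta^*$. Since $G$ is sofic, linear surjunctivity then forces $\Theta^*$ to be surjective. Now I invoke Theorem~\ref{thm:main} once more, this time the clause ``$\Theta$ is pre-injective if and only if $\Theta^*$ is surjective'': surjectivity of $\Theta^*$ yields pre-injectivity of $\Theta$. This is precisely the claim. (Concretely, all of this is bookkeeping with the orthogonality relations~\eqref{eq:main:1}--\eqref{eq:main:4}: post-surjectivity of $\Theta$ says the restriction of $\Theta$ to $(\K^n)^G$ is surjective onto $\bigoplus_G V$, which by~\eqref{eq:main:4} means $\ker(\Theta^*|\K^n G)=0$, i.e.\ $\Theta^*$ is injective; and surjectivity of $\Theta^*$ on $\K^n G$ means, by~\eqref{eq:main:1}, that $\ker(\Theta|\K^n G)$ has trivial orthogonal, hence is trivial, i.e.\ $\Theta$ is pre-injective.)

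The only genuinely substantial step is the linear surjunctivity of sofic groups, and I would be content to cite it; if a self-contained treatment were desired, one would unwind the sofic approximation of $G$ and bound the dimension of the image of the injective map $\Theta^*$ over large ``almost invariant'' finite subsets of $G$, in the same spirit as the equivalence recorded in Theorem~\ref{thm:myhill}. No real obstacle remains beyond that input: once Theorem~\ref{thm:main} is available, the passage from ``post-surjective'' through its dual ``injective'' and then (via surjunctivity) ``surjective'' back to ``pre-injective'' is automatic, which is exactly what makes this a direct proof of Capobianco, Kari and Taati's result in the linear setting.
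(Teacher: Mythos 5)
Your proof is correct and is essentially identical to the paper's: post-surjectivity of $\Theta$ gives injectivity of $\Theta^*$ by Theorem~\ref{thm:main}, surjunctivity of sofic groups gives surjectivity of $\Theta^*$, and Theorem~\ref{thm:main} again gives pre-injectivity of $\Theta$. Your remark that for infinite $\K$ one should invoke the linear (L-)surjunctivity theorem rather than Weiss's finite-alphabet result is a welcome refinement of the citation, but does not change the argument.
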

\begin{proof}
  Let $\Theta$ be a post-surjective linear cellular automaton. By
  Theorem~\ref{thm:main}, $\Theta^*$ is injective, so $\Theta^*$ is
  surjective by~\cite{weiss:sofic}*{\S3}, so $\Theta$ is pre-injective
  again by Theorem~\ref{thm:main}.
\end{proof}

\subsection{Reddite ergo qu\ae\ C\ae saris sunt}
The notion of dual linear cellular automata is quite natural, but its
first appearance seems only to be a passing remark
in~\cite{popovici2:dilatability}. The last line of
Theorem~\ref{thm:main} has been proven, in the setting of locally
finite graphs, by Matthew Tointon in~\cite{tointon:harmonic}. I am
indebted to Professor Coornaert for having pointed out that reference
to me when I shared this note with him.

In a recent article~\cite{gaboriau-seward:soficentropy}, Gaboriau and
Seward study the sofic entropy of algebraic actions, and note the
following consequence of Corollary~\ref{cor:main}: if $G$ is sofic but
not amenable, then the Yuzvinsky addition formula for entropy
$h(G\looparrowright A)=h(G\looparrowright B)+h(G\looparrowright A/B)$
fails for some $G$-modules $B\le A$. Indeed take $A=(\K^n)^G$ and
$B=\ker(\Theta)$ for a surjective, non-pre-injective cellular
automaton $\Theta$. I am grateful to Messrs. Gaboriau and Seward for
having communicated their remark to me ahead of its publication.

\section{Linear cellular automata}\label{ss:lca}
We start with a field $\K$ and an integer $n$. We write
$V\coloneqq\K^n$, and identify $V$ with $V^*$.  There is a natural
bilinear, non-degenerate pairing $V^*\times V\to\K$ given by
\[\langle\phi|v\rangle=\phi(v)=\sum_{i=1}^n \phi_i v_i.\]

Let $G$ be a group. We denote by $V^G$ the vector space of functions
$G\to V$, and declare its closed subsets to be
$\{c\in V^G\mid c_{|S}\in W\}$ for all finite $S\subseteq G$ and all
$W\le V^S$. In particular, the restriction maps
$\pi_S\colon V^G\to V^S$ are continuous for all finite $S\subseteq G$,
and $V^G$ is compact (but not Hausdorff).

We denote by $V^*G$ the vector subspace of finitely-supported
functions in $V^G$. There is a left action of $G$ on $V^G$ by
translation: for $g\in G,c\in V^G$ we define $g c\in V^G$ by
$(g c)(h)=c(g^{-1} h)$. This action preserves $V^*G$. There is also a
bilinear pairing
\[\langle{-}|{-}\rangle\colon V^*G\times V^G\to\K,\qquad\langle\omega|c\rangle=\sum_{g\in G}\langle\omega(g)|c(g)\rangle.\]
\begin{lem}
  $\langle{-}|{-}\rangle$ is non-degenerate in both arguments.\qed
\end{lem}

In the notation introduced above, a linear cellular automaton is both
an element of $V\otimes V^*G$ and a $G$-equivariant, continuous
self-map $\Theta\colon V^G\righttoleftarrow$. Note that $\Theta$
restricts to a self-map $V^*G\righttoleftarrow$.
\begin{prop}\label{prop:closed}
  Let $\Theta\colon V^G\righttoleftarrow$ be a linear cellular
  automaton. Then $\Theta(V^G)$ is a closed subspace of $V^G$.
\end{prop}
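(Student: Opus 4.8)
The plan is to show that $\Theta(V^G)$ equals its own closure, by computing that closure through the finite ``windows'' of the automaton and then appealing to the compactness principle for systems of linear equations over a field.

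I would begin with two preliminary reductions. First, $\Theta(V^G)$ is a $\K$-linear subspace of $V^G$, and by definition (together with closure under intersection) the closed sets of $V^G$ are the intersections of sets $\pi_F^{-1}(W)$ with $F\subseteq G$ finite and $W\le V^F$; since $U\subseteq\pi_F^{-1}(W)\iff\pi_F U\subseteq W$, the closure of a subspace $U\le V^G$ is $\bigcap_{F\text{ finite}}\pi_F^{-1}(\pi_F U)$. Hence it suffices to prove: if $c\in V^G$ satisfies $c_{|F}\in\pi_F(\Theta(V^G))$ for every finite $F$, then $c\in\Theta(V^G)$. Second, writing $\Theta=\sum_{s\in S}\Theta_s s$ for a finite $S\subseteq G$, the restriction $(\Theta d)_{|F}$ depends only on $d_{|S^{-1}F}$, so there is a linear ``local rule'' $\mu_F\colon V^{S^{-1}F}\to V^F$ with $\pi_F\circ\Theta=\mu_F\circ\pi_{S^{-1}F}$; consequently $\pi_F(\Theta(V^G))=\im\mu_F$, and the hypothesis on $c$ becomes: for every finite $F$, the finite linear system $\mu_F(\xi)=c_{|F}$ in the unknown $\xi\in V^{S^{-1}F}$ is solvable.

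Now regard the equation $\Theta d=c$ as one big system of $\K$-linear equations in the unknowns $\bigl(d(g)\bigr)_{g\in G}$; since $(\Theta d)(g')=\sum_{s\in S}\Theta_s\,d(s^{-1}g')$, each of its scalar equations involves only finitely many of these unknowns. Any finite subsystem mentions only finitely many output coordinates $g'$, forming a finite set $F$, hence is a subsystem of the block ``$\mu_F(d_{|S^{-1}F})=c_{|F}$'', which is solvable by the previous paragraph; so \emph{every} finite subsystem of $\Theta d=c$ is solvable. I would then conclude with the standard fact that a system of $\K$-linear equations, each involving finitely many unknowns, is solvable provided every finite subsystem is --- equivalently, an unsolvable such system admits a finite linear combination of its equations equal to a nonzero constant. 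This produces $d\in V^G$ with $\Theta d=c$, completing the proof. (One may repackage the statement as $\Theta(V^G)=\ker(\Theta^*|V^*G)^{\perp}$, the easy inclusion coming from $\langle\omega|\Theta d\rangle=\langle\Theta^*\omega|d\rangle$; but the hard inclusion is exactly what the argument above supplies, so this does not shorten anything.)

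The step I expect to carry the real weight --- and the one it is tempting to fudge --- is the last: the proposition is \emph{not} a soft corollary of compactness of $V^G$. That topology is so coarse that $0$ lies in every nonempty open set, so $V^G$ is compact almost trivially and its compact subsets (e.g.\ singletons) need not be closed; the genuine content sits in the finite-dimensional windows $V^{S^{-1}F}$ and is precisely the linear-algebra compactness principle invoked above, whose own proof rests on extending a linear functional from a subspace (hence on Zorn's lemma). The only other thing to watch is the bookkeeping in the reduction from ``$\mu_F$-solvability for every finite $F$'' to ``every finite subsystem of $\Theta d=c$ is solvable''.
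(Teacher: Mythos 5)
Your argument is correct, but it takes a genuinely different route from the paper, whose own ``proof'' is a citation: it imports verbatim the argument for Theorem~8.8.1 of Ceccherini-Silberstein and Coornaert, an inverse-limit argument in which the solution sets of the truncated systems $\mu_F(\xi)=c_{|F}$ form a directed family of nonempty affine subspaces of finite-dimensional spaces, and the descending chain condition on their images (the Mittag--Leffler condition) forces the projective limit to be nonempty. You reach the same reduction (solvability of every finite window implies solvability of every finite subsystem of $\Theta d=c$) but then finish with the linear compactness principle for systems with finitely supported rows, whose engine is the extension of a linear functional from the row space of the system --- Zorn's lemma rather than the DCC. Both proofs use finite-dimensionality of $V$ in an essential and correctly located way: in yours it is what makes each scalar equation involve only finitely many unknowns, which is exactly where the known counterexamples for infinite-dimensional alphabets would break either argument. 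Your closing parenthesis deserves emphasis: the identity $\im(\Theta|V^G)=\ker(\Theta^*|V^*G)^\perp$ that your argument actually establishes is, applied to $\Theta^*$ in place of $\Theta$, precisely~\eqref{eq:main:1}, whose hard inclusion the paper derives \emph{from} Proposition~\ref{prop:closed}; so your route proves the proposition and that inclusion in one stroke, with no circularity. The trade-off is that the cited argument is choice-free for countable $G$ and adapts to affine (inhomogeneous) maps, while yours is self-contained, shorter, and makes the duality with $\Theta^*$ visible already at this stage. You are also right, and in agreement with the paper's own remark, that compactness of $V^G$ is a red herring here.
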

\begin{proof}
  Verbatim the proof
  of~\cite{ceccherini-coornaert:cag}*{Theorem~8.8.1}.  Note that they
  claim in fact the weaker statement that $\Theta(V^G)$ is closed in
  the prodiscrete topology. Note also that the proposition does not
  follow trivially from the fact that $V^G$ is compact, because $V^G$
  is not Hausdorff.
\end{proof}

Consider a linear cellular automaton $\Theta\in V\otimes V^*G$,
written as $\Theta=\sum_i v_i\otimes\phi_i g_i$ for finitely many
$v_i\in V,\phi_i\in V^*,g_i\in G$. Then, tracing back to our original
definition, its adjoint $\Theta^*\in V^*\otimes V G$ is
$\Theta^*=\sum_i\phi_i\otimes v_i g_i^{-1}$.
\begin{lem}
  Let $\Theta\in V\otimes V^*G$ be a cellular automaton, with adjoint
  $\Theta^*$. Then
  \[\langle\Theta^*(\omega)|c\rangle=\langle\omega|\Theta(c)\rangle\text{ for all }\omega\in V^*G,c\in V^G.\]
\end{lem}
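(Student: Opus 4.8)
The plan is to verify the adjunction identity by direct expansion of both pairings, using the concrete form $\Theta=\sum_i v_i\otimes\phi_i g_i$ and the corresponding formula $\Theta^*=\sum_i\phi_i\otimes v_i g_i^{-1}$ given just above the statement. Since both sides of the claimed equality are bilinear in $(\omega,c)$ and everything in sight is a finite sum over the index $i$, it suffices to check the identity for a single term $\Theta=v\otimes\phi g$, and then sum over $i$.

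First I would write out the right-hand side. For $\Theta=v\otimes\phi g$, recall that $\Theta$ acts on a configuration $c\in V^G$ by $(\Theta c)(h)=\phi\bigl(c(g^{-1}h)\bigr)\,v$, i.e.\ $\Theta(c)=\sum_{h\in G}\phi(c(g^{-1}h))\,v\cdot h$ in the notation where $v\cdot h$ denotes the configuration supported at $h$ with value $v$. Hence
\[\langle\omega|\Theta(c)\rangle=\sum_{h\in G}\langle\omega(h)|v\rangle\,\phi\bigl(c(g^{-1}h)\bigr).\]
Next I would expand the left-hand side. For $\Theta^*=\phi\otimes v g^{-1}$, acting on $\omega\in V^*G$ gives $(\Theta^*\omega)(h)=\langle\omega(g h)|v\rangle\,\phi$ (the translation by $g^{-1}$ sends the value at $gh$ to the coordinate $h$), so
\[\langle\Theta^*(\omega)|c\rangle=\sum_{h\in G}\bigl\langle\langle\omega(g h)|v\rangle\,\phi\,\big|\,c(h)\bigr\rangle=\sum_{h\in G}\langle\omega(gh)|v\rangle\,\phi\bigl(c(h)\bigr).\]
Finally I would match the two expressions by the substitution $h\mapsto g^{-1}h$ in the last sum (a bijection of $G$), which turns $\sum_{h}\langle\omega(gh)|v\rangle\,\phi(c(h))$ into $\sum_h\langle\omega(h)|v\rangle\,\phi(c(g^{-1}h))$, exactly the right-hand side. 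Summing over the finitely many terms $v_i\otimes\phi_i g_i$ completes the proof.

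There is essentially no serious obstacle here; the only points requiring care are bookkeeping ones. One must be consistent about which of the two natural conventions for the action of $g\in G$ is in force — the paper fixes $(gc)(h)=c(g^{-1}h)$ — so that the index-shift produces the inverse $g^{-1}$ in the correct place; getting this backwards would make the two sides fail to match. One should also note that all sums are genuinely finite: $\omega$ has finite support, so only finitely many $h$ contribute, and the pairing is well-defined; this is why the reindexing $h\mapsto g^{-1}h$ is legitimate without convergence concerns. Apart from tracking these conventions, the computation is routine.
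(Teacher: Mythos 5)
Your proof is correct and follows essentially the same route as the paper's: expand both pairings using the explicit forms of $\Theta$ and $\Theta^*$ and observe that the resulting double sums match after the reindexing $h\mapsto g_i^{-1}h$. The only (immaterial) difference is that you reduce to a single rank-one term $v\otimes\phi g$ by bilinearity before expanding, whereas the paper carries the sum over $i$ throughout.
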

\begin{proof}
  Write $\Theta$ as a finite sum $\sum_i v_i\otimes\phi_i g_i$. Then
  the sides of the above equation are respectively
  \begin{align*}
    \sum_{g\in G}\Big\langle\Big\{\sum_i\phi_i\otimes v_i (g_i^{-1}\omega)\Big\}(g)\Big|c(g)\Big\rangle
    = \sum_{g\in G,i}\langle\phi_i|c(g)\rangle\;\langle\omega(g_i g)|v_i\rangle
    \intertext{and}
    \sum_{g\in G}\Big\langle\omega(g)\Big|\Big\{\sum_i v_i\otimes\phi_i(g_i c)\Big\}(g)\Big\rangle
    = \sum_{g\in G,i}\langle\omega(g)|v_i\rangle\;\langle\phi_i|c(g_i^{-1} g)\rangle,
  \end{align*}
  which are just permutations of each other.
\end{proof}

\section{Proof of Theorem~\ref{thm:main}}
Let $\Theta\in M_n(\K G)$ be a linear cellular automaton, and as
in~\S\ref{ss:lca} set $V=V^*=\K^n$, with the usual scalar product.

We begin by the inclusion
$\ker(\Theta|V^*G)^\perp\supseteq\im(\Theta^*|V^G)$
from~\eqref{eq:main:1}. Given $c\in\im(\Theta^*|V^G)$, say
$c=\Theta^*(d)$, for all $\omega\in\ker(\Theta|V^*G)$ we have
\[\langle\omega|c\rangle=\langle\omega|\Theta^*(d)\rangle=\langle\Theta(\omega)|d\rangle=\langle0|d\rangle=0,\]
so $c\perp\ker(\Theta|V^*G)$. The exact same computation gives all
`$\supseteq$' inclusions from~\eqref{eq:main:2}, \eqref{eq:main:3}
and~\eqref{eq:main:4}.

We continue with the inclusion
$\ker(\Theta|V^*G)\perp\subseteq\im(\Theta^*|V^G)$
from~\eqref{eq:main:1}. Given $c\not\in\im(\Theta^*|V^G)$, there
exists an open neighbourhood of $c$ in $V^G\setminus\im(\Theta^*|V^G)$
by Proposition~\ref{prop:closed}; so there exists a finite subset
$S\subseteq G$ and a proper subspace $W<V^S$ such that the projection
$\pi_S(V^G)$ belongs to $W$. Since $V^S$ is finite-dimensional, there
exists a linear form $\omega$ on $V^S$ that vanishes on $W$ but does
not vanish on $c$. Note that $\omega$, qua element of $(V^S)^*$, is
canonically identified with an element of $(V^*)^S$, and therefore
with an element of $V^*G$. From $\omega\perp\im(\Theta^*|V^G)$ we get
$\Theta(\omega)\perp V^G$ so $\Theta(\omega)=0$ because the scalar
product $\langle{-}|{-}\rangle$ is non-degenerate. Therefore
$c\not\perp\ker(\Theta|V^*G)$ as desired.

We continue with the inclusion
$\ker(\Theta|V^G)\perp\subseteq\im(\Theta^*|V^*G)$
from~\eqref{eq:main:2}. Given $\omega\not\in\im(\Theta^*|V^*G)$, there
exists a linear form $c\in(V^*G)^*$ that vanishes on
$\im(\Theta^*|V^*G)$ but does not vanish on $\omega$. Note that
$(V^*G)^*$ canonically identifies with $V^G$. From
$c\perp\im(\Theta^*|V^*G)$ we get $\Theta(c)\perp V^*G$, so
$\Theta(c)=0$ because the scalar product $\langle{-}|{-}\rangle$ is
non-degenerate. Therefore $\omega\not\perp\ker(\Theta|V^G)$ as desired.

We finally consider the inclusion
$\im(\Theta|V^*G)^\perp\subseteq\ker(\Theta^*|V^G)$
from~\eqref{eq:main:3}. Given $c\perp\im(\Theta|V^*G)$, we have
$c\perp\Theta(\omega)$ for all $\omega\in V^*G$, so
$\Theta^*(c)\perp\omega$ for all $\omega\in V^*G$, so
$\Theta^*(c)\perp V^*G$ and therefore $\Theta^*(c)=0$ because the
scalar product $\langle{-}|{-}\rangle$ is non-degenerate. The exact
same computation gives the `$\subseteq$' inclusion
from~\eqref{eq:main:4}.

Recalling that $\Theta$ is pre-injective if and only if
$\ker(\Theta|V^*G)=0$ and $\Theta$ is injective if and only if
$\ker(\Theta|V^G)=0$ and $\Theta$ is post-surjective if and only if
$\im(\Theta|V^*G)=V^*G$ and $\Theta$ is surjective if and only if
$\im(\Theta|V^G)=V^G$, the last conclusions follow.

\begin{bibsection}
\begin{biblist}
\bibselect{math}
\end{biblist}
\end{bibsection}
\end{document}